\documentclass[12pt]{article}
\usepackage{amsmath, amssymb, amsfonts}
\usepackage{amsthm}
\usepackage[utf8]{inputenc}
\usepackage{csquotes}
\usepackage{geometry}
\usepackage{adjustbox}
\usepackage{graphicx}
\usepackage{verbatim}
\usepackage{array}
\usepackage{enumitem}
\usepackage{booktabs}
\usepackage{hyperref}
\usepackage{url}
\usepackage[capitalize]{cleveref}
\usepackage{tikz}
\usetikzlibrary{positioning, shapes.geometric, arrows.meta}

\geometry{a4paper, margin=2.5cm}

\title{One-way multilinear functions of the second order with linear shifts}
\author{Stanislav Semenov \\
\href{mailto:stas.semenov@gmail.com}{stas.semenov@gmail.com} \\
\href{https://orcid.org/0000-0002-5891-8119}{ORCID: 0000-0002-5891-8119}}
\date{June 19, 2025}

\theoremstyle{definition}
\newtheorem{definition}{Definition}[section]

\newtheorem{conjecture}[definition]{Conjecture}

\theoremstyle{plain}

\newtheorem{proposition}[definition]{Proposition}

\theoremstyle{remark}

\begin{document}

\maketitle

\begin{abstract}
We introduce and analyze a novel class of binary operations on finite-dimensional vector spaces over a field \( K \), defined by second-order multilinear expressions with linear shifts. These operations generate polynomials whose degree increases linearly with each iterated application, while the number of distinct monomials grows combinatorially. We demonstrate that, despite being non-associative and non-commutative in general, these operations exhibit \emph{power associativity} and \emph{internal commutativity} when iterated on a single vector. This ensures that exponentiation \( a^n \) is well-defined and unambiguous.

Crucially, the absence of a closed-form expression for \( a^n \) suggests a \emph{one-way property}: computing \( a^n \) from \( a \) and \( n \) is efficient, while recovering \( n \) from \( a^n \) (the \emph{Discrete Iteration Problem}) appears computationally hard. We propose a \emph{Diffie–Hellman-like key exchange protocol} based on this principle, introducing the \emph{Algebraic Diffie–Hellman Problem (ADHP)} as an underlying assumption of security.

In addition to the algebraic foundations, we empirically investigate the orbit structure of these operations over finite fields, observing frequent emergence of long cycles and highly regular behavior across parameter sets. Motivated by these dynamics, we further propose a pseudorandom number generation (PRNG) strategy based on multi-element multiplication patterns. This approach empirically achieves near-maximal cycle lengths and excellent statistical uniformity, highlighting the potential of these operations for cryptographic and combinatorial applications.
\end{abstract}

\subsection*{Mathematics Subject Classification}
17A30 (Algebras satisfying identities), 15A75 (Exterior algebra; multilinear algebra)

\subsection*{ACM Classification}
I.1 Symbolic and Algebraic Manipulation, F.2 Analysis of Algorithms and Problem Complexity, E.3 Data Encryption

\section*{Introduction}

We introduce a class of binary operations \( * : V \times V \to V \) on finite-dimensional vector spaces \( V = K^n \) over a field \( K \). These operations are defined componentwise through second-order multilinear expressions, augmented with linear shifts. A prototype of such an operation was first proposed in~\cite{semenov2025stratified}. A distinctive aspect of these operations is how algebraic complexity scales with iterated applications. Specifically, while the total polynomial degree of \( a^n := a * a * \dots * a \) increases linearly with \( n \), the number of distinct monomials involved grows combinatorially, making the derivation of a general closed-form expression highly non-trivial.

This inherent complexity leads to a central theme of this work: the system's one-way characteristics~\cite{Goldreich2001}. We observe that computing \( a^n \) for given \( a \) and \( n \) is computationally efficient. However, the reverse problem—determining \( n \) from \( a^n \) (which we formally define as the Discrete Iteration Problem)—appears to be computationally intractable, particularly for general parameters and large \( n \). This algebraic asymmetry is a cornerstone of our investigation.

The study of such structures integrates concepts from non-associative algebra, symbolic computation, and computational complexity. In this paper, we first precisely define these operations and provide explicit componentwise formulas to demonstrate their second-order nature and the rapid escalation of complexity with iteration. We then rigorously examine the recursive behavior and fundamental algebraic properties, including the crucial observations of power associativity and internal commutativity that emerge for powers of a single element. Finally, we leverage these properties and the conjectured hardness of the Discrete Iteration Problem to propose a Diffie–Hellman-like key exchange protocol~\cite{Diffie1976} operating over finite fields, thereby introducing the Algebraic Diffie–Hellman Problem (ADHP). This framework holds significant potential for the development of new cryptographic primitives, as well as advancing research in algebraic dynamics and computational algebra.

\section{Multilinear Operation on \texorpdfstring{$K^3$}{K³}}

Consider the operation \( * : V \times V \to V \), where \( V = K^3 \), defined component-wise by the following rule:
\begin{align*}
(ab)_0 &= a_0 + b_0 + a_0 b_0 + A a_1 b_1 + C a_2 b_1 + B a_2 b_2, \\
(ab)_1 &= a_1 + b_1 + a_1 b_0 + a_0 b_1 + D a_1 b_1 + E a_1 b_2, \\
(ab)_2 &= a_2 + b_2 + a_2 b_0 + a_0 b_2 + D a_2 b_1 + E a_2 b_2,
\end{align*}
where \( A, B, C, D, E \in K \) are fixed parameters.

The explicit form of the product vector \( a * a := a^2 \) is given by:
\[
a^2 =
\begin{pmatrix}
(a_0 + 1)^2 + A a^2_1 + B a^2_2 + C a_1 a_2 -1 \\
a_1 (D a_1 + E a_2 + 2 (a_0 + 1)) \\
a_2 (D a_1 + E a_2 + 2 (a_0 + 1))
\end{pmatrix}.
\]

\subsection*{Symbolic Expansion of \texorpdfstring{$a^3$}{a³}}

Using symbolic computation, we obtain the following expansion for the zeroth component of \( a^3 := a * a * a \):
\begin{align*}
(a^3)_0 &= (a_0 + 1)^3 + A D a_1^3 + B E a_2^3 \\
&\quad + 3 a_0 (A a_1^2 + B a_2^2) + 3 C a_0 a_1 a_2 \\
&\quad + (A E + C D) a_1^2 a_2 + (B D + C E) a_1 a_2^2 \\
&\quad + 3 A a_1^2 + 3 B a_2^2 + 3 C a_1 a_2 - 1, \\
(a^3)_1 &= a_1 (3 (a_0 + 1)^2 + (A + D^2) a_1^2 + (B + E^2) a_2^2 \\
&\quad + 3 (D a_1 + E a_2) (a_0 + 1) + (C + 2 D E) a_1 a_2, \\
(a^3)_2 &= a_2 (3 (a_0 + 1)^2 + (A + D^2) a_1^2 + (B + E^2) a_2^2 \\
&\quad + 3 (D a_1 + E a_2) (a_0 + 1) + (C + 2 D E) a_1 a_2.
\end{align*}

These expressions illustrate how cubic terms naturally emerge from the iterated application of the operation. With each iteration, the total polynomial degree increases linearly, while the number and diversity of monomials grow combinatorially. Each coefficient depends on the parameters \( A, B, C, D, E \), reflecting complex interactions among the components \( a_0, a_1, a_2 \).

\subsection*{Functional Structure and Recursive Dependence}

The explicit form of the product vector \( a^2 \) admits a simplified functional representation. Define two scalar functions:
\[
g(a) := (a_0 + 1)^2 + A a_1^2 + B a_2^2 + C a_1 a_2 - 1, \qquad
h(a) := D a_1 + E a_2 + 2(a_0 + 1).
\]
Then the result of the self-product can be expressed compactly as:
\[
a^2 =
\begin{pmatrix}
g(a) \\
a_1 \cdot h(a) \\
a_2 \cdot h(a)
\end{pmatrix}.
\]

This formulation reveals a nested compositional structure: computing \( a^3 = a^2 * a \) amounts to evaluating
\[
a^3 =
\begin{pmatrix}
g(a^2) \\
a_1 \cdot h(a^2) \\
a_2 \cdot h(a^2)
\end{pmatrix},
\]
where \( a^2 \) itself is given in terms of \( g(a) \) and \( h(a) \). Expanding this, we obtain:
\[
a^3 =
\begin{pmatrix}
g\big(g(a),\; a_1 h(a),\; a_2 h(a)\big) \\
a_1 \cdot h\big(g(a),\; a_1 h(a),\; a_2 h(a)\big) \\
a_2 \cdot h\big(g(a),\; a_1 h(a),\; a_2 h(a)\big)
\end{pmatrix}.
\]

Each component of \( a^n \) depends recursively on all components of \( a^{n-1} \), and thus, ultimately, on all components of the original vector \( a \). This leads to an intricate cross-branching functional structure, in which component-wise dependencies propagate nonlinearly across levels. The recursive process combines polynomial evaluation with functional composition, which results in increasing algebraic complexity at each iteration and makes closed-form simplification progressively more difficult.

\section{Multilinear Operation on \texorpdfstring{$K^4$}{K⁴} and Higher Dimensions}

Consider the operation \( * : V \times V \to V \), where \( V = K^4 \), defined component-wise by the following rule:
\begin{align*}
(ab)_0 &= a_0 + b_0 + a_0 b_0 + A a_1 b_1 + E a_3 b_1 + B a_2 b_2 + D a_1 b_2 + F a_3 b_2 + C a_3 b_3 , \\
(ab)_1 &= a_1 + b_1 + a_1 b_0 + a_0 b_1 + G a_1 b_1 + H a_1 b_2 + I a_1 b_3, \\
(ab)_2 &= a_2 + b_2 + a_2 b_0 + a_0 b_2 + G a_2 b_1 + H a_2 b_2 + I a_2 b_3, \\
(ab)_3 &= a_3 + b_3 + a_3 b_0 + a_0 b_3 + G a_3 b_1 + H a_3 b_2 + I a_3 b_3,
\end{align*}
where \( A, B, C, D, E, F, G, H, I \in K \) are fixed parameters.

The explicit form of the product vector \( a * a := a^2 \) is given by:
\[
a^2 =
\begin{pmatrix}
(a_0 + 1)^2 + A a^2_1 + B a^2_2 + C a^2_3 + D a_1 a_2 + E a_1 a_3 + F a_2 a_3 - 1 \\
a_1 (G a_1 + H a_2 + I a_3 + 2 (a_0 + 1)) \\
a_2 (G a_1 + H a_2 + I a_3 + 2 (a_0 + 1)) \\
a_3 (G a_1 + H a_2 + I a_3 + 2 (a_0 + 1))
\end{pmatrix}.
\]

It is worth noting that the fourth-degree construction presented here generalizes the three-dimensional version by introducing additional cross terms involving the fourth coordinate \( a_3 \). This pattern naturally extends to higher dimensions: new coordinates can be incorporated into the operation by systematically adding bilinear combinations of the new components with existing ones, following the same principles demonstrated in the \( K^3 \) and \( K^4 \) cases. In this way, one can define analogous second-order multilinear operations on \( K^n \) for arbitrary \( n \), without fundamentally altering the structure.

In particular, if we remove the fourth row and eliminate all monomials involving the index 3, we recover exactly the three-dimensional version defined earlier. Despite the conceptual simplicity of this extension, we refrain from presenting the general \( n \)-dimensional case in this work, as the resulting expressions quickly become unwieldy. Our focus remains on the 3D and 4D instances, which are sufficient to illustrate the core combinatorial and algebraic phenomena.

\section{Analysis of the Algebraic Structure}

The binary operation \( * \) defined on \( V = K^3 \) (or more generally on \( K^n \)) does not form an algebraic structure with global associativity or commutativity. This can be seen symbolically from the definition: the presence of asymmetric bilinear terms such as \( a_2 b_1 \) (without corresponding \( a_1 b_2 \)) breaks symmetry. Therefore, in general,
\[
a * b \ne b * a, \quad \text{and} \quad (a * b) * c \ne a * (b * c).
\]

The operation also lacks a neutral element for general addition-like cancellation. In particular, there is no element \( 0 \in V \) such that
\[
a * 0 = 0 * a = 0
\]
for all \( a \in V \). However, the zero vector \( e = (0, 0, 0) \in V \) plays the role of a \emph{multiplicative identity}:
\[
e * e = e, \quad a * e = e * a = a.
\]

The structure defined by \( (V, *) \) is a non-associative, non-commutative, unital magma with a distinguished identity element. Its algebraic behavior under iteration (powers) is well-defined for fixed inputs, but the general algebraic axioms (e.g., semigroup, monoid) do not hold without further restrictions.

\subsection*{Local Commutativity and Power Associativity}

Despite the lack of global commutativity and associativity in the operation \( * \), it exhibits certain well-structured behaviors when applied repeatedly to the same vector. In particular, symbolic computations show that when the operation is iterated on a fixed input \( a \in V \), the resulting powers \( a^n := \underbrace{a * a * \dots * a}_{n \text{ times}} \) behave in a commutative and power-associative manner.

\begin{definition}[Internal commutativity]
A binary operation \( * \) on a set \( V \) is said to be \emph{internally commutative} at an element \( a \in V \) if for all positive integers \( m, n \), the identity
\[
a^m * a^n = a^n * a^m
\]
holds. This property applies to powers of a single element rather than to arbitrary pairs of vectors.
\end{definition}

\begin{definition}[Power associativity]
A binary operation \( * \) on a set \( V \) is said to be \emph{power associative} if for every element \( a \in V \), the expression \( a^n := a * a * \dots * a \) (with \( n \) factors) is well-defined for all \( n \in \mathbb{N} \) (i.e., for \( n \ge 1 \)), regardless of the placement of parentheses. That is, any parenthesization of the product yields the same result.
\end{definition}

Symbolic computations indicate that our operation is power associative and internally commutative. For instance, the following expressions are symbolically equal:
\[
(a * a) * (a * a) = (a * a * a) * a = a^4; \quad \text{and} \quad ((a * a * a) * a) * a = a^5.
\]
This property ensures that exponentiation via repeated application of the operation is well-defined and unambiguous.

\begin{proposition}[Power identity]
Assume that \( * \) is power associative and internally commutative. Then for any \( a \in V \) and for all positive integers \( m, n \), we have
\[
a^m * a^n = a^{m+n}.
\]
\end{proposition}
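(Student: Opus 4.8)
The plan is to prove the identity $a^m * a^n = a^{m+n}$ by induction on $n$ (for fixed but arbitrary $m$), relying only on the two hypotheses already granted: power associativity and internal commutativity. The base case $n=1$ is the statement that $a^m * a = a^{m+1}$, which is essentially the definition of $a^{m+1}$ as an $(m+1)$-fold product under any parenthesization — power associativity guarantees that the particular parenthesization $(a^m) * a$ equals the canonical value $a^{m+1}$.

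For the inductive step, assume $a^m * a^n = a^{m+n}$ for all $m$; I want to deduce $a^m * a^{n+1} = a^{m+n+1}$. First I would write $a^{n+1}$ as $a^n * a$ (base case reasoning again, or just the definition). Then $a^m * a^{n+1} = a^m * (a^n * a)$. The key move is to reassociate: by power associativity, since $a^m$, $a^n$, and $a$ are all powers of the single element $a$, the product $a^m * (a^n * a)$ equals $(a^m * a^n) * a$ — every parenthesization of the $(m+n+1)$-fold product of $a$ with itself agrees. Applying the induction hypothesis to the inner product gives $(a^{m+n}) * a$, and one more appeal to power associativity (or the definition of $a^{m+n+1}$) yields $a^{m+n+1}$, completing the induction.

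The role of internal commutativity deserves a remark: in the bare induction above it is not strictly needed if one is willing to invoke power associativity in the strong form that \emph{all} parenthesizations of an iterated product of $a$ coincide, since $a^m * a^n$, $a^n * a^m$, and $a^{m+n}$ are then all the same object by fiat. If instead power associativity is read more narrowly (only left-nested products are canonical), then internal commutativity is what lets one swap factors to realize an arbitrary split $a^{m+n} = a^m * a^n$ rather than only $a^{m+n} = a^{m+n-1} * a$; I would state explicitly which reading I am using and then both hypotheses get used. The main (and really only) subtlety is bookkeeping: making precise what ``$a^n$'' denotes as an unparenthesized symbol and confirming that the reassociation step $a^m * (a^n * a) = (a^m * a^n) * a$ is a legitimate instance of the power-associativity hypothesis — once that is pinned down, the argument is a short two-line induction with no computation. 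I do not expect any genuine obstacle here; the proposition is a formal consequence of the two named properties, and the entire content is organizing the induction cleanly.
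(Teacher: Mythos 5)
Your proof is correct and is essentially the mirror image of the paper's: the paper inducts on \(m\) and reassociates \((a^m * a) * a^n = a^m * (a * a^n)\), while you induct on \(n\) and reassociate \(a^m * (a^n * a) = (a^m * a^n) * a\); both hinge on a single application of power associativity plus the definition of the next power. Your closing remark about exactly where internal commutativity is needed is a point the paper glosses over --- its base case \(a * a^n = a^{n+1}\) is asserted ``trivially by definition'' even though, under the left-associative convention the paper uses elsewhere, that order of factors already requires internal commutativity or the strong reading of power associativity, whereas your base case \(a^m * a = a^{m+1}\) really is the definition.
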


\begin{proof}[Proof sketch]
We proceed by induction on \( m \). The base case \( m = 1 \) holds trivially by definition of \( a^{n+1} \):
\[
a * a^n = a^{n+1}.
\]
Assume that the identity holds for \( m \), i.e., \( a^m * a^n = a^{m+n} \). Then for \( m+1 \),
\[
a^{m+1} * a^n = (a^m * a) * a^n.
\]
By power associativity, we may regroup as
\[
a^m * (a * a^n) = a^m * a^{n+1}.
\]
Applying the inductive hypothesis for \( m \) and \( n+1 \) factors, we get
\[
a^m * a^{n+1} = a^{m+(n+1)} = a^{m+n+1},
\]
which completes the inductive step.
\end{proof}

\section{Hypothesis on the Absence of a Closed Form for \texorpdfstring{$a^n$}{aⁿ}}

\begin{conjecture}[No closed-form expression]
There is no general closed-form expression for the components of \( a^n := \underbrace{a * a * \dots * a}_{n \text{ times}} \) in terms of a fixed polynomial formula with finitely many terms whose structure does not depend on \( n \).
\end{conjecture}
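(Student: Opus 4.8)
The plan is to first establish a structural fact that, somewhat unexpectedly, determines what any ``closed form'' for $a^n$ could possibly look like, and then to settle the statement as written by a degree count.

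I would begin by reducing the iteration to multiplication in a quadratic extension ring. Observe that $*$ leaves each plane $L_a = \{(s, t a_1, t a_2) : s,t \in K\}$ invariant and preserves the ratio $a_1 : a_2$. Writing $x = a_0 + 1$ and parametrising a point of $L_a$ as $v \leftrightarrow x(v) + t(v)\,\omega$ (so that $v = (x-1,\, t a_1,\, t a_2)$), a direct computation from the componentwise formulas of Section~1 shows that $*$ restricted to $L_a$ is precisely multiplication in
\[
R_a = K[a_0,a_1,a_2][\omega]\,\big/\,\bigl(\omega^2 - \lambda_a\,\omega - \mu_a\bigr), \qquad \lambda_a = D a_1 + E a_2,\quad \mu_a = A a_1^2 + B a_2^2 + C a_1 a_2,
\]
with $a$ itself corresponding to $z = (a_0+1) + \omega$. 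Then $a^n$ corresponds to $z^n = \alpha_n + \beta_n\omega$, so that $a^n = (\alpha_n - 1,\ \beta_n a_1,\ \beta_n a_2)$; comparing $(\alpha_2,\beta_2)$ and $(\alpha_3,\beta_3)$ against the explicit expansions of $a^2$ and $a^3$ above is a cheap consistency check. In particular $\alpha_n$ and $\beta_n$ satisfy the second-order linear recurrence with characteristic polynomial $T^2 - (2x + \lambda_a)T + (x^2 + \lambda_a x - \mu_a)$.

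Next I would extract the degree growth. Expanding $z^n = \sum_{k} \binom{n}{k} (a_0+1)^{n-k}\,\omega^k$ and reducing via $\omega^2 = \lambda_a\omega + \mu_a$, every reduced power $\omega^k$ with $k \ge 2$ contributes only monomials divisible by $\mu_a$ (and $\mu_a$ vanishes identically at $a_1 = a_2 = 0$), while the $k = 1$ term contributes nothing to the $\omega^0$ part. Hence the coefficient of $a_0^n$ in $(a^n)_0 = \alpha_n - 1$ equals the coefficient of $a_0^n$ in $(a_0+1)^n$, namely $1$; thus $(a^n)_0$ has total degree exactly $n$ in $a_0,a_1,a_2$. (The same bound also follows by a routine induction on $n$ straight from the recursion $a^{n} = a^{n-1} * a$ of Section~1, bypassing the ring language.) Any ``fixed polynomial formula with finitely many terms whose structure does not depend on $n$'' has some bounded total degree $d$, so it cannot represent $(a^n)_0$ as soon as $n > d$; a parallel argument shows the number of distinct nonzero monomials of $(a^n)_0$ is itself unbounded for generic parameters. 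This is exactly the statement to be proved.

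The main obstacle is not this degree count but the meaning of ``closed-form expression'' itself. The first step shows that $a^n$ \emph{does} admit a fully explicit description — the $n$-th power of $z$ in the two-dimensional algebra $R_a$, equivalently the $n$-th power of a fixed $2\times 2$ companion matrix over $K[a_0,a_1,a_2]$, equivalently a Binet-type formula $\beta_n = (z^n - \bar z^n)/(\omega - \bar\omega)$ in the two roots of the characteristic polynomial. So the conjecture is correct only under the narrow reading in which the admissible formulas are polynomials of $n$-independent shape; the stronger assertion ``no closed form of any kind'' is \emph{false}. Turning the conjecture into a clean theorem therefore hinges on first pinning down a precise class of admissible expressions — one that forbids index sets or exponents depending on $n$ — after which the content reduces to the degree-growth lemma together with the explicit reduction above; conversely, any attempt at a stronger non-existence result must confront the quadratic-ring representation just described.
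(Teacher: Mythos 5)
Your proposal is correct, and it goes far beyond what the paper does: the statement is only a conjecture there, supported by heuristic evidence (the monomial counts $\binom{n+2}{2}$ and $\binom{n+3}{3}-1$ and the observed growth of the symbolic expansions of $a^2$ and $a^3$), not by a proof. Your degree argument does settle the literal statement, and in fact the quadratic-ring reduction is verifiably right: writing $u=(u_0,ta_1,ta_2)\leftrightarrow (u_0+1)+t\omega$, the componentwise formulas give $x_{uv}=x_ux_v+t_ut_v\mu_a$ and $t_{uv}=x_ut_v+x_vt_u+t_ut_v\lambda_a$, which is exactly multiplication modulo $\omega^2-\lambda_a\omega-\mu_a$; I checked that $z^2$ and $z^3$ reproduce the paper's displayed expansions of $a^2$ and $a^3$ (e.g.\ $\lambda_a\mu_a$ accounts precisely for the $ADa_1^3+BEa_2^3+(AE+CD)a_1^2a_2+(BD+CE)a_1a_2^2$ block, and $\lambda_a^2+\mu_a$ for the $(A+D^2),(B+E^2),(C+2DE)$ coefficients). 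For the narrow claim itself you do not even need the ring: induction on $a^n=a^{n-1}*a$ via the $a_0b_0$ term already shows the coefficient of $a_0^n$ in $(a^n)_0$ is $1$, so the total degree is exactly $n$ and no single polynomial of $n$-independent shape can represent all the powers. The one loosely argued point is the ``parallel argument'' that the number of distinct monomials is unbounded for generic parameters; that claim is not needed for the statement as written, so the gap is harmless.

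What your approach buys --- and what you should state more forcefully --- is that it simultaneously refutes the conjecture's intended broader meaning and the narrative built on it. Since $a^n\leftrightarrow z^n$ with $z=(a_0+1)+\omega$ in a rank-two commutative associative $K$-algebra, power associativity and internal commutativity become immediate corollaries rather than empirical observations, $a^n$ admits a Binet-type/companion-matrix closed form computable by fast exponentiation of the recurrence with characteristic polynomial $T^2-(2x+\lambda_a)T+(x^2+\lambda_a x-\mu_a)$, and the Discrete Iteration Problem reduces to a discrete logarithm in $\mathbb{F}_{p^2}^*$ or $\mathbb{F}_p^*\times\mathbb{F}_p^*$ according to whether that quadratic is irreducible --- which also explains, exactly, the empirically observed orbit lengths $N-1$, $N^2-1$ and their halves in Section~7. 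So the conjecture is true only in its pedantic reading (a degree count) and false in the sense the paper relies on; your reduction is the key fact either way.
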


This conjecture is supported by symbolic expansions computed for \( a^2 \) and \( a^3 \), which reveal rapid growth in both the number and degree of distinct monomials. The number of monomials in each component of \( a^n \) appears to grow combinatorially with \( n \), while the degree of the resulting polynomial increases linearly. More precisely, if \( a \in K^3 \), then each component of \( a^n \) is a multivariate polynomial in \( a_0, a_1, a_2 \) of total degree \( n \), but the number of possible monomials of degree \( n \) in three variables is
\[
\binom{n+2}{2},
\]
which grows quadratically in \( n \). Aggregated over all degrees from 1 to \( n \), the total number of monomial terms is
\[
\sum_{k=1}^n \binom{k+2}{2} = \binom{n+3}{3} - 1,
\]
indicating that the expression becomes increasingly complex with each iteration.

Furthermore, the coefficients of the monomials are not simple constants or binomial patterns: they depend intricately on the parameters \( A, B, C, D, E \) and the combinatorics of how the terms propagate through nested applications of the operation. The lack of associativity or distributive structure (in the usual algebraic sense) further complicates any attempt to compress or simplify the resulting expressions across arbitrary \( n \).

While specific cases (e.g., small \( n \), or special parameter values) may admit simplification, it appears unlikely that a uniform closed-form expression for \( a^n \) exists for arbitrary \( n \). Therefore, recursive or symbolic expansion methods remain the most viable means for studying the behavior of the sequence \( \{a^n\}_{n \in \mathbb{N}} \).

\section{Hypothesis on the Discrete Iteration Problem}

In conventional algebraic systems, such as multiplicative groups over finite fields or elliptic curves, the \emph{discrete logarithm problem} (DLP) asks: given \( g \) and \( g^n \), find \( n \). Analogously, we introduce the problem of recovering the iteration count \( n \) from the power \( a^n \) of a fixed vector \( a \in V \) under the operation \( * \). We refer to this as the \emph{Discrete Iteration Problem} (DIP).

\begin{definition}[Discrete Iteration Problem (DIP)]
Given a vector \( a \in V \) and an output \( v \in V \) such that \( v = a^n := \underbrace{a * a * \dots * a}_{n \text{ times}} \), determine the exponent \( n \in \mathbb{N} \).
\end{definition}

We hypothesize that this problem is computationally hard in general.

\begin{conjecture}[Hardness of the Discrete Iteration Problem]
Let \( * : V \times V \to V \) be the second-order multilinear operation defined above, and let \( a \in V \) be a fixed vector. Then, given \( a \) and \( a^n \), it is computationally hard to recover \( n \) for general parameter values, input vectors, and sufficiently large \( n \).
\end{conjecture}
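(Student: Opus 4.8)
An unconditional proof is out of reach, so the realistic goal is a \emph{conditional} statement: a polynomial-time equivalence between DIP and a problem already believed hard. The plan I would follow has three parts --- (i) compress the data an adversary must invert; (ii) reduce a standard hard problem to DIP; (iii) match it with a reduction the other way. Part (i) turns out to be the crux: it succeeds so completely that it becomes the main obstacle to proving the conjecture in the strong form stated.

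A short induction on the componentwise formulas shows that, for a fixed $a \in K^3$ (and analogously in $K^4$ and the higher-dimensional extensions), every non-zeroth coordinate of $a^n$ is a \emph{common} scalar multiple of the corresponding coordinate of $a$. Writing $w_n := (a^n)_0$ and $(a^n)_j = a_j\,c_n$ for $j \ge 1$, one obtains a closed affine recursion
\[
\begin{pmatrix} w_{n+1}\\ c_{n+1}\end{pmatrix}
= M\begin{pmatrix} w_n\\ c_n\end{pmatrix} + b,
\qquad
M = \begin{pmatrix} 1+a_0 & \beta\\ 1 & \delta \end{pmatrix},\quad
b = \begin{pmatrix} a_0\\ 1\end{pmatrix},
\]
where $\beta = A a_1^2 + C a_1 a_2 + B a_2^2$ and $\delta = 1 + a_0 + D a_1 + E a_2$ depend only on $a$; one checks directly that this reproduces the displayed expansions of $a^2$ and $a^3$. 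Homogenizing the affine map, $a^n$ is determined by $\widehat{M}^{\,n-1} v_0$ for the fixed $3 \times 3$ matrix $\widehat{M} = \left(\begin{smallmatrix} M & b\\ 0 & 1\end{smallmatrix}\right)$ and $v_0 = (a_0, 1, 1)^{\top}$. Thus DIP for this operation \emph{is} the problem of recovering $n$ from $\widehat{M}$ and $\widehat{M}^{\,n-1} v_0$.

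Parts (ii) and (iii) are then the routine linear-algebra reduction. The characteristic polynomial of $\widehat{M}$ factors over $\overline{K}$ as $(\lambda - 1)\,\chi_M(\lambda)$ with $\chi_M$ quadratic, so every eigenvalue lies in $K$ or a quadratic extension. In the generic case --- $\widehat{M}$ diagonalizable with distinct eigenvalues --- one passes to the eigenbasis, where each eigen-coordinate of $\widehat{M}^{\,n-1} v_0$ is a known constant times $\lambda_i^{\,n-1}$; a quotient of two of them is a known constant times $(\lambda_i/\lambda_j)^{\,n-1}$, from which $n$ follows by one discrete logarithm in $\mathbb{F}_{q}^{\times}$ with $q \in \{|K|, |K|^2\}$ (combined, when $\chi_M$ splits, by the Chinese remainder theorem). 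The non-generic cases only make recovery easier: a repeated eigenvalue forces polynomial growth in $n$, so $n$ is read off directly; a low-order root of unity bounds $n$; a starting vector out of general position deletes terms. Conversely, for suitable $a$ the eigenvalues of $M$ generate a large-order cyclic subgroup of $\mathbb{F}_{q}^{\times}$, and choosing $a$ with a prescribed spectrum --- one linear and one quadratic condition on $a_0, a_1, a_2$ --- embeds a discrete-logarithm instance in that subgroup back into DIP. Together these give a polynomial-time equivalence of DIP with the discrete logarithm problem in $\mathbb{F}_{|K|}^{\times}$ or $\mathbb{F}_{|K|^2}^{\times}$.

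The obstacle, then, is not a missing lemma but the scope of the statement: since all $n$-dependence of $a^n$ collapses onto iteration of a single $3 \times 3$ matrix, DIP is not a genuinely \emph{new} hard problem, nor is it hard for truly \emph{general} parameters. The defensible version is the conditional equivalence above, and the same analysis prescribes which parameters and vectors must be excluded (split or low-order spectrum, non-diagonalizable $\widehat{M}$, rational eigenvalue ratios) and warns that, even at best, the construction inherits the subexponential index-calculus attacks on finite-field discrete logarithms rather than the stronger hardness of elliptic-curve groups. Proving the conditional equivalence is routine once the affine recursion is in hand; supporting any claim of hardness beyond that of ordinary finite-field DLP is what is genuinely not available.
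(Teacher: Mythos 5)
The paper offers no proof of this statement: it is a conjecture, supported only by the heuristic that the monomial count of \(a^n\) grows combinatorially and that no closed form is apparent. So there is nothing to match your argument against; what your proposal actually does is show that the paper's heuristic is misleading, and the algebra at its heart checks out. Writing \(a^n = (w_n,\ a_1 c_n,\ a_2 c_n)\) and substituting into the defining formulas with right factor \(a\) gives exactly \(w_{n+1} = (1+a_0)\,w_n + (A a_1^2 + C a_1 a_2 + B a_2^2)\,c_n + a_0\) and \(c_{n+1} = w_n + (1 + a_0 + D a_1 + E a_2)\,c_n + 1\) with \((w_1,c_1)=(a_0,1)\); this reproduces the paper's displayed \(a^2 = (g(a),\,a_1 h(a),\,a_2 h(a))\) and the \(a^3\) expansion, and the same collapse occurs in \(K^4\). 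Hence for left-normed powers of a single element the ``combinatorial explosion'' factors entirely through a two-dimensional affine recursion, \(a^n\) is read off from \(\widehat{M}^{\,n-1}v_0\), and DIP reduces in polynomial time to a discrete logarithm in \(\mathbb{F}_{N}^{\times}\) or \(\mathbb{F}_{N^{2}}^{\times}\). This also \emph{explains} the paper's empirical orbit lengths \(N-1\), \(N^2-1\) and their halves: they are the multiplicative orders of the eigenvalues of \(M\).

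To be clear about status: this is not a proof of the conjecture, and it cannot be turned into one --- it is evidence that the conjecture, read as asserting hardness ``for general parameter values and input vectors'' beyond ordinary finite-field DLP, is false, and that the paper's stated justification (absence of a closed form, growth of monomials) does not survive the recursion. A few points you should make explicit if you write this up: (i) the recursion is for the left-normed powers \(a^{n+1}=a^n * a\), which by the paper's power-associativity claim is the only case needed; (ii) recovering \(c_n\) from \(a^n\) requires some \(a_j\neq 0\) with \(j\ge 1\), and the degenerate case \(a_1=a_2=0\) is even easier, since then \((a^n)_0=(1+a_0)^n-1\); (iii) the reverse reduction (embedding a DLP instance into DIP) rests on an existence claim --- that a prescribed eigenvalue is attainable by choosing \(a\) --- which you only sketch, though the forward reduction alone suffices to refute the strong reading of the conjecture. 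The defensible residue is the conditional statement you identify: DIP is polynomial-time equivalent to finite-field DLP for suitably restricted parameters, which also caps the security of the proposed key-exchange at that of \(\mathbb{F}_{N^2}^{\times}\).
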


This conjecture is strongly motivated by the apparent absence of a closed-form expression for \( a^n \), as discussed in the previous section. The number of distinct monomials in each component of \( a^n \) grows combinatorially, and the structure of the resulting polynomials becomes increasingly intricate with each iteration.

Unlike in classical algebraic groups, where exponentiation follows a single, known algebraic rule (e.g., repeated multiplication in a cyclic group), here each iteration involves recursive composition of polynomial functions whose form and coefficients change dynamically at every step. This makes direct algebraic inversion exceedingly difficult. Therefore, recovering \( n \) from \( a^n \) would typically require evaluating successive powers \( a^1, a^2, \dots, a^k \) until a match is found—an approach with exponential complexity with respect to the bit length of \( n \) in the worst case.

In this sense, the system exhibits a strong \emph{one-way} character: it is easy to compute \( a^n \) from \( a \) and \( n \), but computationally hard to reverse the process. This places it in a similar conceptual category to standard one-way functions used in cryptography, though further dedicated analysis is required to establish formal security guarantees against various attack models.

\section{Finite Fields and Cryptographic Application}

To enable practical computation and potential cryptographic deployment, we consider restricting the base field \( K \) to a finite field or ring. Two natural choices are:

\begin{itemize}
    \item The finite field \( \mathbb{F}_p = \mathbb{Z}/p\mathbb{Z} \), where \( p \) is a prime;
    \item An extension field \( \mathbb{F}_{q} = \mathbb{F}_p[x]/(f(x)) \), where \( f(x) \) is an irreducible polynomial over \( \mathbb{F}_p \).
\end{itemize}

In both cases, arithmetic in \( K^n \) remains well-defined, and all expressions involving addition and multiplication of field elements carry over without modification. Importantly, the core algebraic and combinatorial properties of the operation \( * \) — such as second-order multilinearity, power associativity, and internal commutativity — are preserved when \( K \) is replaced by a finite field. The choice of \( p \) or \( f(x) \) should be guided by security considerations (e.g., sufficiently large \( p \) to resist discrete logarithm attacks in \(\mathbb{F}_p\), or appropriate degree and irreducibility of \( f(x) \) for \(\mathbb{F}_q\)) and computational efficiency.

\subsection*{Key Exchange via Commutative Powers}

The properties of internal commutativity and power associativity allow a two-party key exchange protocol, similar in spirit to the classical Diffie–Hellman scheme, but operating over a non-associative algebraic structure.

\textbf{Protocol 1: Key Exchange}

Let \( a \in V = K^n \) be a publicly agreed base vector. Each party selects a private exponent:
\begin{itemize}
    \item Alice chooses a secret \( m \in \mathbb{N} \), computes \( A = a^m \), and sends it to Bob;
    \item Bob chooses a secret \( n \in \mathbb{N} \), computes \( B = a^n \), and sends it to Alice.
\end{itemize}

Each party then computes the shared key:
\[
K = (a^m)^n = (a^n)^m = a^{m+n},
\]
using internal commutativity and power associativity. An external observer, given \( a \), \( a^m \), and \( a^n \), would need to solve the discrete iteration recovery problem (DIP) to determine \( m \) or \( n \), which, as conjectured earlier, is computationally difficult in general. The hardness of DIP is a crucial assumption for the security of this protocol.

\subsection*{Discussion}

This construction thus leads to a natural \emph{algebraic Diffie–Hellman problem} (ADHP) over the non-associative system \( (V, *) \), defined as follows:

\begin{definition}[Algebraic Diffie–Hellman Problem]
Given a public base vector \( a \in V \) and public values \( a^m \) and \( a^n \), compute \( a^{m+n} \) without knowing either \( m \) or \( n \).
\end{definition}

The presumed hardness of ADHP stems from the lack of a closed-form expression for \( a^n \), and from the recursive, combinatorially explosive nature of the operation \( * \). Unlike the classical Diffie-Hellman protocol based on modular exponentiation, this protocol relies on the iterated composition of polynomial functions, which introduces a different type of algebraic complexity. While further cryptanalysis is required, these properties suggest a one-way behavior that may be suitable for cryptographic protocols requiring key agreement, pseudorandom generation, or iterative state evolution. Future work should investigate the resistance of this protocol to known attacks and explore potential vulnerabilities arising from the non-associative nature of the operation.

\section{Orbit Analysis of the Operator in \texorpdfstring{$V = K^3$}{V = K³}}

In this chapter, we analyze the dynamical behavior of the operator defined on the space \( V = K^3 \), where \( K = \mathbb{Z}_N \) denotes the finite field of integers modulo a prime \( N \). The binary operation \( * : V \times V \to V \) is defined componentwise as follows:
\begin{align*}
(ab)_0 &= a_0 + b_0 + a_0 b_0 + A a_1 b_1 + C a_2 b_1 + B a_2 b_2, \\
(ab)_1 &= a_1 + b_1 + a_1 b_0 + a_0 b_1 + D a_1 b_1 + E a_1 b_2, \\
(ab)_2 &= a_2 + b_2 + a_2 b_0 + a_0 b_2 + D a_2 b_1 + E a_2 b_2,
\end{align*}
where \( A, B, C, D, E \in K \) are fixed parameters. This definition supports a rich variety of algebraic structures and dynamical behaviors depending on the choice of parameters and modulus \( N \).

The primary goal of this study is to investigate the orbits of elements under repeated self-application (i.e., \( x, x*x, (x*x)*x, \dots \)). The full state space of the operator for a given modulus \( N \) consists of \( N^3 \) elements. Our focus is on the orbit lengths (cycle sizes), as well as their distribution, under various combinations of \( A, B, C, D, E \) and modulus \( N \).

\subsection{Methodology}

For each such element, we construct its orbit by repeatedly applying the operation \( x \mapsto x * x \) in the sense of left-associative exponentiation (i.e., \( x, x^2 = x * x, x^3 = (x * x) * x, \dots \)), until a previously visited state is reached, thus closing a cycle.

Special attention is paid to orbit lengths, as they reflect fundamental properties of the operator. In particular, we analyze the distribution of orbit lengths and focus on the presence and frequency of orbits of maximal length, such as \( N - 1 \) and \( N^2 - 1 \), as well as their half-lengths \( (N - 1)/2 \) and \( (N^2 - 1)/2 \).

\subsection{Key Observations and Patterns}

Experiments conducted for different values of \( N \) and parameter combinations revealed several notable patterns in the behavior of orbits.

\textbf{General behavior:}
\begin{itemize}
    \item \textit{Full coverage:} Across all tests, the operator covered the entire space \( K^3 \), with every element belonging to some orbit. This indicates well-structured dynamics across the field.
    \item \textit{Presence of long orbits:} Orbits of lengths \( N - 1 \) and \( N^2 - 1 \), as well as their halves, were observed in nearly all tested configurations. While the exact counts varied, their presence was remarkably stable across parameter sets.
\end{itemize}

\textbf{Parameter dependence:}
\begin{itemize}
    \item \textbf{Case \( N = 23 \), fixed \( C = 1, D = 1, E = 2 \):}
    \begin{itemize}
        \item An exhaustive analysis was performed over all \( A, B \in K \), yielding 462 distinct parameter combinations.
        \item \textit{Orbits of length \( N^2 - 1 \):} The proportion of such orbits ranged from 0\% to 33\%. Although rare on average (mean 2.77\%), specific combinations, such as \( (A, B, C, D, E) = (9, 19, 1, 1, 2) \), produced a notably high proportion (33\%).
        \item \textit{Orbits of length \( N - 1 \):} Much more common, these accounted for approximately 80\% of orbits on average, reaching up to 89\% (e.g., for \( (6, 1, 1, 1, 2) \)).
        \item \textit{Half-length orbits:} Orbits of length \( (N - 1)/2 \) and \( (N^2 - 1)/2 \) were observed with mean proportions of 8\% and 1\%, respectively.
    \end{itemize}

    \item \textbf{Case \( N = 61 \), representative parameters \( A = 31, B = 30, C = 1, D = 1, E = 2 \):}
    \begin{itemize}
        \item These parameters (\( A = \lfloor N/2 \rfloor + 1 \), \( B = \lfloor N/2 \rfloor \)) generated orbit distributions consistent with the averages for \( N = 23 \).
        \item The number of orbits of length \( N^2 - 1 \) was approximately \( \lfloor N/2 \rfloor \) (i.e., 31 orbits, corresponding to 0.58\% of the total), while those of length \( N - 1 \) accounted for 85\%.
        \item Initial vectors generating the longest orbits typically had the form \( (0, 1, x) \) or \( (0, 2, x) \), with small second components.
    \end{itemize}
\end{itemize}

\subsection{On the Emergence of Maximal-Length Orbits}

Of particular interest are parameter configurations that result in a significantly elevated number of \( N^2 - 1 \)-length orbits. Our experiments show that for various values of \( N \), such configurations exist and can produce up to 30\% of the total orbits at maximal length. Although no universal pattern has been identified, the phenomenon is consistent across multiple moduli.

Interestingly, even with uniform or "typical" parameter choices, the number of \( N^2 - 1 \)-length orbits often approximates \( \lfloor N/2 \rfloor \). This suggests that multiple distinct parameter sets can lead to qualitatively similar distributions, even though the actual orbits differ.

\subsection{Efficient Search for Maximal Orbits at Large \texorpdfstring{$N$}{N}}

A key practical insight is that initial vectors leading to orbits of length \( N^2 - 1 \) often have a simple structure, such as \( [0, 1, x] \) or \( [0, \text{small}, x] \). This observation enables the development of efficient probabilistic search heuristics for large \( N \), avoiding exhaustive iteration over all \( N^3 \) elements. By focusing on strategically chosen initial vectors with simple first components, one can locate long orbits with high probability and reduced computational cost.

\section{Pseudorandom Number Generation}

This section explores two distinct approaches to constructing pseudorandom number generators (PRNGs) based on the multilinear operator defined on the space \( V = K^3 \), where \( K = \mathbb{Z}_N \) is a finite field of integers modulo a prime \( N \). We contrast a traditional single-orbit approach with a novel multi-element multiplication strategy, demonstrating the superior cycle lengths and statistical uniformity achieved by the latter.

\subsection{Conventional Orbit-Based PRNG: Challenges and Observations}

A straightforward method of constructing a PRNG from the operator is to generate a sequence by iterating a single initial element under repeated self-application. That is, we compute the sequence
\[
x,\quad x * x = x^2,\quad (x * x) * x = x^3,\quad \dots
\]
using left-associative exponentiation. The orbit of \( x \) under this process serves as the pseudorandom stream, and the quality of the PRNG is determined by the orbit’s length and the statistical distribution of its components.

Empirical analysis across various moduli \( N \) and parameter sets \( (A, B, C, D, E) \) reveals several consistent challenges with this approach:

\begin{itemize}
    \item \textbf{Dominance of short cycles.} The majority of single-element orbits exhibit relatively short lengths. Approximately 80\% of orbits tend to have length \( N - 1 \), while maximal-length orbits of length \( N^2 - 1 \) are rare and typically require extensive search or parameter tuning to locate.

    \item \textbf{Non-uniform distribution.} Even when long orbits are found, the resulting sequences may exhibit statistical biases. Certain values of the components \( x_0, x_1, x_2 \) may appear more frequently than others, indicating local clustering and reducing the effectiveness of the PRNG in applications requiring uniformity.

    \item \textbf{Parameter sensitivity.} The length and structure of orbits are highly sensitive to the choice of parameters \( A, B, C, D, E \). While some configurations occasionally yield long orbits with good statistical properties, no general rule guarantees optimal behavior. This makes the single-orbit method less robust in practice.
\end{itemize}

In summary, while the single-element PRNG construction is simple and conceptually appealing, it often suffers from short cycles and suboptimal statistical behavior unless carefully tuned.

\subsection{Multi-Element PRNG: Leveraging Stratified Algebra for Enhanced Cycles}

To address the limitations of the single-orbit strategy, we propose a multi-element PRNG construction. This method exploits a key property of the operator in the context of stratified algebras: the product of elements from distinct "strata" may lead to elements in new strata, enabling a richer traversal of the state space.

Instead of computing powers of a single element via \( x \mapsto x * x \), the PRNG operates using a predefined cyclic pattern of multiplications involving multiple seed vectors. The state of the PRNG at time \( t \) is determined both by the current vector and by the current position in the multiplication pattern. This increases the effective state space from \( N^3 \) to \( N^3 \times \text{pattern\_length} \).

\begin{itemize}
    \item \textbf{Maximal cycle lengths.} Empirical evidence shows that this approach can produce cycles that approach or even achieve the theoretical maximum length. For example:
    \begin{itemize}
        \item For \( N = 37 \) and a two-element alternating pattern \texttt{[0, 1]}, the observed cycle length was 101,304, compared to the theoretical maximum of \( 37^3 \times 2 = 101,306 \).
        \item With a five-element pattern \texttt{[0, 0, 0, 1, 2]}, the resulting cycle length was 253,260, nearly reaching the maximum \( 37^3 \times 5 = 253,265 \).
    \end{itemize}
    These near-maximal pure cycles indicate that the PRNG is capable of exploring almost the entire state space without entering transient states or sub-cycles.

    \item \textbf{Uniform distribution.} Frequency histograms of the components \( x_0, x_1, x_2 \) across the generated sequences are nearly flat, with each value in \( \mathbb{Z}_N \) appearing with approximately equal probability. This suggests excellent statistical quality in the output, a critical requirement for cryptographic and simulation applications.

    \item \textbf{Parameter and seed design.} Although the multi-element method significantly improves performance, careful selection of parameters \( (A, B, C, D, E) \), seed vectors, and the multiplication pattern remains essential. However, good configurations appear more discoverable within this framework, and the likelihood of achieving long, uniform cycles increases markedly compared to the single-orbit case.
\end{itemize}

\subsection*{Summary}

The multi-element PRNG framework represents a substantial advancement over the conventional single-orbit strategy. By introducing structured variation through multiple seed elements and cyclic multiplication patterns, it enables generation of long-period, uniformly distributed sequences with high reliability. These properties are direct consequences of the stratified structure and nonlinear combinatorics of the underlying operator. Future work will focus on theoretical models explaining these empirical results, formal security analysis, and broader exploration of parameter configurations across different finite fields.

\section*{Code Availability}
The Python implementation of the multilinear operations, including the M3 and M4 examples, and the key exchange protocol, is available under an MIT License at the following GitHub repository: \url{https://github.com/stas-semenov/one-way-multilinear/}.

\section*{Conclusion}

In this work, we introduced and analyzed a novel class of binary operations on finite-dimensional vector spaces over a field \( K \), defined by second-order multilinear forms with linear shifts. These operations exhibit rapidly increasing algebraic complexity under iteration: while the polynomial degree of \( a^n \) grows linearly, the number of distinct monomials expands combinatorially.

Despite being non-associative and non-commutative in general, the operations satisfy power associativity and internal commutativity when applied repeatedly to a single vector. These structural properties guarantee that the iterated product \( a^n \) is well-defined. A central conjecture is the absence of a closed-form expression for \( a^n \), implying a potential one-way behavior: computing \( a^n \) is efficient, while recovering \( n \) from \( a^n \) — the Discrete Iteration Problem (DIP) — appears computationally hard.

To further explore the implications of this complexity, we conducted a detailed empirical study of orbit structures in the space \( V = K^3 \) over finite fields \( \mathbb{Z}_N \). The analysis revealed that the operator induces a full partition of the space into disjoint orbits, with orbit lengths strongly dependent on the choice of parameters \( A, B, C, D, E \), but largely independent of the modulus \( N \). Orbits of lengths \( N - 1 \) and \( N^2 - 1 \) consistently appear, often with surprisingly stable frequencies. Notably, the number of maximal-length orbits (length \( N^2 - 1 \)) is often close to \( \lfloor N/2 \rfloor \), supporting the conjecture that the orbit structure reflects deep algebraic complexity even for moderate field sizes.

These observations reinforce the view that the proposed operation family exhibits inherent one-wayness, making it a viable foundation for cryptographic applications. We presented a Diffie–Hellman-like key exchange protocol based on this behavior, leading to the formulation of the Algebraic Diffie–Hellman Problem (ADHP).

In addition, we proposed a pseudorandom number generation (PRNG) method based on multi-element multiplication patterns, which empirically achieves near-maximal cycle lengths and excellent uniformity across state components. This approach addresses key limitations of traditional orbit-based PRNGs and demonstrates the practical utility of stratified algebra in state space traversal.

Future work includes formal cryptanalysis of ADHP, exploration of new primitives such as digital signatures or pseudorandom generators, and deeper theoretical investigation into the algebraic dynamics of the system — particularly regarding orbit structures, parameter sensitivity, and their interplay with computational hardness.


\end{document}